\newcommand{\al}{\alpha}
\newcommand{\ga}{\gamma}
\newcommand{\de}{\delta}
\newcommand{\la}{\lambda}
\newcommand{\om}{\omega}
\newcommand{\iy}{\infty}
\theoremstyle{plain}
\newtheorem{thm}{Theorem}
\newtheorem{lem}{Lemma}
\theoremstyle{remark}
\begin{document}

\begin{center}
{\large\bf An inverse problem 
for the differential operator on the graph with a cycle
with different orders on different edges 
}
\\[0.2cm]
{\bf Natalia Bondarenko} \\[0.2cm]
\end{center}

\vspace{0.5cm}

{\bf Abstract.} We consider a variable order differential operator
on a graph with a cycle. We study the inverse spectral problem for this operator
by the system of spectra. The main results of the paper are the uniqueness theorem
and the constructive procedure for the solution of the inverse problem. 

\medskip

{\bf Keywords:} geometrical graphs, differential operators, inverse spectral problems, Weyl-type
matrices, method of spectral mappings.

\medskip

{\bf AMS Mathematics Subject Classification (2010):} 34A55 34L05 47E05.

\vspace{1cm}

{\large \bf 1. Introduction} \\

Differential operators on geometrical graphs (networks) play a fundamental role in many problems in science
and engineering. 
Main results for second-order operators on graphs and their applications
are described in \cite{LLS94, KS97, Kuch04, Bel04, PB04, Exner08, Yur10}. 
In this paper, we focus our attention on inverse spectral problems that
consist in recovering the coefficients of differential operators on graphs by their spectral characteristics.
Thus we assume that the structure of the graph, boundary and matching conditions
in the vertices are known a priori.

Although for second-order differential operators the inverse spectral theory has been developed
fairly completely, there are only a few works for higher-order operators \cite{Yur07}. 
In paper \cite{Yur13}, V.A. Yurko started to study inverse problems for various order differential operators, 
i.e. when the orders of differential equations are different on different edges of the graph. 
Papers \cite{PB03-1, PB03-2} 
describe some mechanical models with various order differential operators.

In work \cite{Yur13}, an inverse problem is solved on a star-type graph. Now we plan to investigate
an inverse problem for a various order operator on a graph with a cycle. We use the system
of spectra, corresponding to different boundary and matching conditions, for recovering the
potential of the differential operator. This problem statement is a natural generalization of the
classical inverse Sturm-Liouville problem on a finite interval by two spectra (see monographs
\cite{FY01, Lev84}).

Let us come to the formulation of the problem.
Consider a compact graph $G$ with the vertices $V = \{ v_0, \dots, v_m \}$
and the edges $\mathcal{E} = \{ e_0, \dots, e_m \}$, where $e_j = [v_j, v_0]$, $j = \overline{1, m}$,
and $e_0$ is a cycle containing only the vertex $v_0$. 
Thus $v_j$, $j = \overline{1, m}$, are boundary vertices and $v_0$ is the only internal vertex.
Let $T_j$ be the length of the edge $e_j$. 
For each edge $e_j \in \mathcal{E}$, we introduce the parameter $x_j \in [0, T_j]$
in such a way, that for $j = \overline{1, m}$, the end $x_j = 0$ corresponds to the vertex $v_j$, 
and the end $x_j = T_j$ corresponds to $v_0$. For $j = 0$, both ends correspond 
to the vertex~$v_0$.

Fix the integers $2 = n_0 \le n_1 \le \dots \le n_m$. Consider the following 
differential equations of various orders:
\begin{equation} \label{eqv}
    y_j^{(n_j)} + \sum_{\mu = 0}^{n_j - 2} q_{\mu j}(x_j) y_j^{(\mu)}(x_j) = \lambda y_j(x_j), \quad j = \overline{0, m},
\end{equation}
where $q_{\mu j} \in L[0, T_j]$.
We call the collection $q := \{ q_{\mu j} \}_{j = \overline{0, m}, \mu = \overline{0, n_j - 2}}$ {the potential} on the graph $G$.

Now we are going to introduce matching conditions in the internal vertex $v_0$, that generalize
Kirchhoff's matching conditions for Sturm-Liouville operators on graphs \cite{Yur10} and matching
conditions for higher-order differential operators \cite{Yur07}. Consider the linear forms
$$
U_{j\nu}(y_j)=\sum_{\mu=0}^{\nu} \ga_{j\nu\mu}y_j^{(\mu)}(T_j),
\quad \ga_{j \nu} := \ga_{j \nu \nu} \ne 0, \quad j=\overline{1,m}, \, \nu = \overline{0, n_j-1},
$$
$$
 U_{0\nu}(y_0) = y_0^{(\nu)}(T_0), \quad \nu = 0, 1,       
$$
where $\ga_{j \nu \mu}$ are some complex numbers.
Define the continuity condition $\mbox{Cont}(\nu)$ and the Kirchhoff's condition
$\mbox{Kirch}(\nu)$ of the $\nu$-th order:
\begin{align*}
    \mbox{Cont}(\nu)\colon & 
    \left\{\begin{array}{ll}
    U_{m\nu}(y_m) = U_{j\nu}(y_j), & j = \overline{0, m-1} \colon \nu < n_j - 1,\\ 
    y_0(0) = U_{0\nu}(y_0), & \text{if} \: \nu = 0; 
    \end{array}\right. \\
    \mbox{Kirch}(\nu)\colon & \sum_{j \colon \nu < n_j} U_{j \nu}(y_j) = \delta_{1 \nu} y_0'(0).
\end{align*}
Here and below $\de_{jk}$ is the Kronecker delta.

Fix an edge number $s = \overline{1, m}$ and orders $k = \overline{1, n_s-1}$, $\mu = \overline{k, n_s}$.
Let $\Lambda_{sk\mu} = \{ \la_{l s k\mu}\}_{l \ge 1}$ be the spectrum of the boundary value problem 
$L_{s k \mu}$ for the system \eqref{eqv} under the boundary conditions
\begin{align*}
    y_s^{(\nu - 1)}(0) = 0, & \quad \nu = \overline{1, k-1}, \mu, \\
    y_j^{(\nu - 1)}(0) = 0, & \quad \nu = \overline{1, n_j-k}, \, j = \overline{1, m}\backslash s\colon n_j > k, \\
    y_j(0) = 0, & \quad j = \overline{1, m}\colon n_j \le k,
\end{align*}
and the matching conditions $\mbox{Cont}(\nu)$, $\nu = \overline{0, k - 1}$,
$\mbox{Kirch}(\nu)$, $\nu = \overline{k, n_s - 1}$, 
in the vertex $v_0$. 
Note that the total number of the boundary conditions and the matching conditions equals
$\sum\limits_{j = 0}^m n_j$, i.e. the sum of the orders on the edges. In Section 3 we discuss the question of regularity
for these conditions.

We will use the spectra $\{ \Lambda_{s k \mu}\}$ for recovering of the potential $\{ q_{\mu j}\}$,
but this information is insufficient, and we need additional data related to the cycle.
Let $S_0(x_0, \lambda)$ and $C_0(x_0, \lambda)$ be the solutions of the differential equation 
\eqref{eqv} on the edge
$e_0$ ($n_0 = 2$), satisfying the initial conditions
$$
    S_0(0, \lambda) = C'_0(0, \lambda) = 0, \quad S'_0(0, \lambda) = C_0(0, \lambda) = 1.
$$ 
Denote 
\begin{equation} \label{defH}
    h(\lambda) := S_0(T_0, \lambda), \quad H(\lambda) := C_0(T_0, \lambda) - S_0'(T_0, \lambda),
    \quad d(\la) := C_0(T_0, \la) + S_0'(T_0, \la).
\end{equation}
Note that the functions $h(\la)$, $H(\la)$ and $d(\la)$ are entire in $\la$ of order $1/2$.
Let $\{ \nu_n\}_{n \ge 1}$ be the zeros of $h(\lambda)$, and $\omega_n := \text{sign}\, H(\nu_n)$.
Here we assume for the sake of simplicity, that the potential $q_{00}(x_0)$
is real-valued. For the non-self-adjoint case, one can use the approach described in \cite{Yur12}.

We study the following 

\medskip

{\bf Inverse problem 1.} {\it Given the spectra $\Lambda_{s k \mu}$, $s = \overline{1, m}$,
$k = \overline{1, n_s - 1}$, $\mu = \overline{k, n_s}$, and the signs $\Omega := \{ \omega_n\}_{n \ge 1}$,
construct the potentials $q_{\mu j}$, $j = \overline{0, m}$, $\mu = \overline{0, n_j - 2}$.}

\medskip

Our goal is to prove the unique solvability of Inverse problem 1 and develop a constructive
procedure for its solution. Our approach is based on the method of spectral mappings \cite{FY01, Yur02}
and some ideas of paper \cite{Yur08} concerning an inverse problem for Sturm-Liouville operator on a
graph with a cycle. 
Our general strategy is to solve auxiliary inverse problems on the boundary
edges. These problems are not local problems on intervals, since they use information from the
whole graph, but they are close to local problems by their properties.
Then the problem is reduced to the well-studied Sturm-Liouville periodic inverse problem for the cycle
\cite{Mar75, Stan70, Yur08}.
 
The paper is organized as follows. In Section 2, we introduce so-called Weyl-type matrices
for each of the boundary edges and show how to construct them by the given spectra. In
Section 3, we study asymptotics of special solutions of system \eqref{eqv}.
In Section 4, we discuss auxiliary inverse problems on the boundary edges and on the cycle.
In the Section 5, we arrive at the main results of our paper.
We also provide Appendix with an example.

\bigskip

{\large \bf 2. Weyl-type solution and Weyl-type matrices}

\bigskip

In this section, we introduce some special solutions of system \eqref{eqv} and study their structural
and analytical properties.

Fix $j = \overline{0, m}$. Let $\{ C_{kj}(x_j, \la)\}_{k = \overline{1, n_j}}$ be a fundamental system
of solutions of equation \eqref{eqv} on the edge $e_j$ under initial conditions
$C_{kj}^{(\mu - 1)}(0, \la) = \de_{k \mu}$, $k, \mu = \overline{1, n_j}$.
For each fixed $x_j \in [0, T_j]$, the functions $C_{kj}^{(\mu - 1)}(x_j, \la)$ are
entire in $\la$-plane of order $1/n_j$. We also have
\begin{equation} \label{detC}
    \det \left[ C_{kj}^{(\mu - 1)}(x_j, \la)\right]_{k, \mu} \equiv 1, \quad j = \overline{0, m}.
\end{equation}
 
Fix $s=\overline{1,m}$ and $k=\overline{1, n_s-1}$.
Let $\Psi_{sk}=\{\psi_{skj}\}_{j=\overline{1,m}}$ be the solutions of system 
\eqref{eqv} satisfying the conditions
\begin{equation} \label{BC} 
\begin{cases}
\psi_{sks}^{(\nu-1)}(0)=\delta_{k\nu}, & \nu=\overline{1,k},  \\
\psi_{skj}^{(\xi-1)}(0)=0, & \xi=\overline{1,n_j-k},\quad j=\overline{1,m}\backslash s \colon k < n_j, \\
\psi_{skj}(0)=0, & j=\overline{1,m}\colon k\ge n_j,
\end{cases}
\end{equation} 
\begin{equation} \label{MC}
\text{Cont}(\nu ), \quad \nu = \overline{0,k-1},\quad \text{Kirch}(\nu ), \quad \nu = \overline{k,n_s-1}.
\end{equation}

The vector-function $\Psi_{sk}$ is called {\it the Weyl-type solution} of order $k$ 
for the boundary vertex $v_s$. Additionally define $\psi_{sn_js}(x_s,\lambda )=C_{n_js}(x_s,\lambda )$, $s=\overline{1,m}$.

Let $M_{s k \mu}(\la) := \psi_{sks}^{(\mu - 1)}(0, \la)$. For each fixed $s = \overline{1, m}$,
the matrix $M_s(\la) := [M_{s k \mu}(\la)]_{k, \mu = 1}^{n_s}$
is called {\it the Weyl-type matrix} with respect to the boundary vertex $v_s$.
The notion of the Weyl-type matrices is a generalization of the notion of the Weyl function ($m$-function) for the
classical Sturm-Liouville operator (see \cite{Mar77, FY01}) and the notion of Weyl matrices for higher-order
differential operators (see \cite{Yur07, Yur13, Yur02}).

It follows from \eqref{BC}, that $M_{s k \mu}(\la) = \de_{k \mu}$ for $k \ge \mu$. 
Moreover, 
\begin{equation} \label{relpsi}
\psi_{sks}(x_s, \la) = C_{ks}(x_s, \la) + \sum_{\mu = k + 1}^{n_s} M_{s k \mu}(\la) C_{\mu s}(x_s, \la), \quad 
s = \overline{1, m}, \quad k = \overline{1, n_s}.
\end{equation}

Now we plan to study the connection between the Weyl-type matrices, the spectra $\Lambda_{s k \mu}$
and the functions, defined in \eqref{defH}. For this purpose, one can easily expand the functions $\psi_{s k j}(x_j, \la)$ by
the fundamental systems $C_{\mu j}(x_j, \la)$ and substitute these expansions into
the matching conditions \eqref{MC}. 
Solving the resulting linear system $E_{s k}$, one gets 
for $s = \overline{1, m}$, $1 \le k < \mu \le n_s$:
\begin{equation} \label{MDelta}
M_{s k \mu}(\la) = -\frac{\Delta_{s k \mu}(\la)}{\Delta_{s k k}(\la)}. 
\end{equation} 
Here $\Delta_{s k \mu}(\la)$, $k \le \mu$, is he characteristic function for the
boundary value problem $L_{s k \mu}$, and its zeros coincide with the eigenvalues $\Lambda_{s k \mu}$.
The functions $\Delta_{s k \mu}$ are entire in $\la$ and, consequently, $M_{s k \mu}(\la)$ are meromorphic in $\la$.
Similarly to \cite{Yur13}, one can easily prove the following fact

\begin{lem}
Each characteristic function $\Delta_{sk\mu}(\lambda )$ can be determined uniquely by its zeros 
$\Lambda_{s k \mu} = \{\lambda_{lsk\mu }\}_{l\ge 1}$.
\end{lem}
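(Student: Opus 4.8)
The plan is to use the standard tool of Hadamard's factorization theorem, exactly as in the higher-order situation of \cite{Yur13}. First I would verify the growth order of $\Delta_{sk\mu}$. By construction, $\Delta_{sk\mu}(\la)$ is the determinant of the linear system $E_{sk}$, whose entries are finite linear combinations (with the fixed coefficients $\ga_{j\nu\mu}$) of the values $C_{kj}^{(\mu-1)}(T_j,\la)$ and $C_{kj}^{(\mu-1)}(0,\la)$. Each such value is entire in $\la$ of order $1/n_j$, and since $n_0=2$ is the smallest of the orders, every entry is entire of order at most $1/n_0=1/2$. As sums, products and determinants of entire functions of order $\le\rho$ are again entire of order $\le\rho$, the function $\Delta_{sk\mu}$ is entire of order $\le 1/2<1$.

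Next I would apply Hadamard's theorem. Since $\Delta_{sk\mu}$ has order strictly less than $1$, its genus is $0$: there is no exponential factor and the canonical product over the zeros converges. Hence
\[
\Delta_{sk\mu}(\la)=C\,\la^{p}\prod_{l\ge 1}\Bigl(1-\frac{\la}{\la_{lsk\mu}}\Bigr),
\]
where $p\ge 0$ is the multiplicity of the possible zero at the origin and $C$ is a nonzero constant. Both $p$ and the product are read off directly from the zero set $\Lambda_{sk\mu}$, so $\Delta_{sk\mu}$ is determined by $\Lambda_{sk\mu}$ up to the single scalar $C$.

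It remains to fix $C$. Here I would appeal to the asymptotic behaviour of $\Delta_{sk\mu}(\la)$ as $|\la|\to\infty$ along a ray avoiding the zeros (for instance $\la\to-\infty$), which follows from the asymptotics of the fundamental solutions $C_{kj}$ to be established in Section 3. The leading term of this asymptotic depends only on the known structural data, namely the orders $n_j$, the lengths $T_j$, the coefficients $\ga_{j\nu\mu}$ and the combinatorics of the conditions defining $L_{sk\mu}$, and not on the unknown potential. Computing the corresponding asymptotics of the canonical product along the same ray and matching leading terms then pins down $C$ uniquely. Consequently any two characteristic functions sharing the zero set $\Lambda_{sk\mu}$ have the same $p$, the same product and the same $C$, and therefore coincide.

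The main obstacle is this last step: establishing the potential-independent leading asymptotic of $\Delta_{sk\mu}$ that fixes the normalizing constant $C$. This rests on the asymptotic analysis of the special solutions, which is the subject of Section 3, and is the only genuinely technical ingredient; Hadamard's theorem then handles the factorization automatically once the order bound $<1$ is in place.
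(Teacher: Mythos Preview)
Your proposal is correct and follows exactly the standard route that the paper invokes by citing \cite{Yur13}: Hadamard factorization for an entire function of order $\le 1/2$, together with the potential-independent leading asymptotics of $\Delta_{sk\mu}(\la)$ supplied in Section~3 to pin down the remaining multiplicative constant. The paper itself gives no further details beyond the reference, so there is nothing to add.
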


Furthermore, analysing the structure of determinants in the systems $E_{sk}$
(see the example in Appendix for clarity), we obtain the relations
\begin{align}
\label{Delta1}
\Delta_{sk\mu}(\lambda )&=\bigl(d(\lambda )-2\bigr)F_{s\mu }(\lambda )+h(\lambda )G_{s\mu }(\lambda ), \quad k = 1, \\
\label{Deltak}
\Delta_{sk\mu}(\lambda )&=h(\lambda )G_{sk\mu }(\lambda ), \quad k>1,
\end{align}
where $F_{s\mu }(\lambda )$, $G_{s\mu }(\lambda )$, $G_{sk\mu }(\lambda )$ are some combinations of $C_{lj}^{(\nu)}(x_j,T_j)$.
We will use formulas \eqref{Delta1}, \eqref{Deltak} to find the data, associated with the cycle,
from the characteristic determinants $\Delta_{sk\mu}(\la)$.

\bigskip

{\large \bf 3. Asymptotic behavior of the Weyl-type solution}

\bigskip

Fix $j = \overline{0, m}$. Let $\la = \rho_j^{n_j}$. The $\rho$-plane can be partitioned into
sectors of angle $\dfrac{\pi}{n_j}$: 
$$
   S_{\nu j} = \left\{ \arg \rho_j \in \biggl( \frac{\nu \pi}{n_j}, \frac{(\nu + 1)\pi}{n_j} \biggr) \right\},
   \quad \nu = \overline{0, 2 n_j  - 1}.
$$
Let us fix one of them and call it simply $S_j$. Then the roots $R_{1j}$, $R_{2j}$, \dots, $R_{n_j j}$ of the
equation $R^{n_j} - 1 = 0$ can be numbered in such a way that
\begin{equation} \label{sector}
 \mbox{Re} (\rho_j R_{1j}) < \mbox{Re} (\rho_j R_{2 j}) < \dots < \mbox{Re}(\rho_j R_{n_j j}), \quad \rho_j \in S_j.
\end{equation}

Denote
$$
    \Omega_{0j} := 1, \quad \Omega_{kj} := \det\left[R_{\xi j}^{(\nu - 1)} \right]_{\xi, \nu = 1}^{n_j}, \quad
    \om_{kj} := \frac{\Omega_{k - 1, j}}{\Omega_{k j}}, \quad j = \overline{0, m}, \, k = \overline{1, n_j},
$$
$$
   [1]_j := 1 + O(\rho_j^{-1}), \quad |\rho_j| \to \infty. 
$$
The following Lemma has been proved in \cite{Yur13}:

\begin{lem} \label{lem:asympt}
Fix $j = \overline{0, m}$ and a sector $S_j$ with property \eqref{sector}.
Let $k = \overline{1, n_j - 1}$ and let $y_j(x_j, \la)$ and $z_j(x_j, \la)$ be solutions of equation \eqref{eqv}
on the edge $e_j$ under the initial conditions 
$$                               
    y_j(0) = y_j'(0) = \dots = y_j^{(k - 1)}(0) = 0,
$$
$$
    z_j(0) = z_j'(0) = \dots = z_j^{(k - 2)}(0) = 0, \quad z_j^{(k - 1)}(0) = 1.
$$
Then for $x_j \in (0, T_j]$, $\nu = \overline{0, n_j - 1}$, $\rho_j \in S_j$, $|\rho_j| \to \infty$,
$$
    y_j^{(\nu)}(x_j, \la) = \sum_{\mu = k + 1}^{n_j} A_{\mu j}(\rho_j) (\rho_j R_{\mu j})^{\nu}
    \exp(\rho_j R_{\mu j} x_j)[1]_j,
$$   
$$
    z_j^{(\nu)}(x_j, \la) = \frac{\om_{kj}}{\rho_j^{k -1}}(\rho_j R_{kj})^{\nu} \exp(\rho_j R_{k j} x_j)[1]_j +
    \sum_{\mu = k + 1}^{n_j} B_{\mu j}(\rho_j) (\rho_j R_{\mu j})^{\nu}
    \exp(\rho_j R_{\mu j} x_j)[1]_j,
$$
where the coefficients $A_{\mu j}(\rho_j)$, $B_{\mu j}(\rho_j)$ do not depend on $x_j$. Here and below we assume
that $\arg \rho_j = \mbox{const}$, as $|\rho_j| \to \infty$.
\end{lem}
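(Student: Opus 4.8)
The plan is to derive both asymptotic formulas from the behaviour of a Birkhoff-type fundamental system of \eqref{eqv} on $e_j$, reading off the coefficients from the prescribed initial data by a Vandermonde argument. First I would fix $j$ and apply the classical Birkhoff--Tamarkin theory to the $n_j$-th order equation \eqref{eqv}: writing it as a perturbation of $y_j^{(n_j)} = \la y_j$ and running the method of successive approximations on the equivalent integral equation, one constructs a fundamental system $\{ e_{\mu j}(x_j, \la)\}_{\mu = 1}^{n_j}$ with the uniform asymptotics
\begin{equation} \label{birkhoff}
    e_{\mu j}^{(\nu)}(x_j, \la) = (\rho_j R_{\mu j})^{\nu} \exp(\rho_j R_{\mu j} x_j) [1]_j, \quad \nu = \overline{0, n_j - 1},
\end{equation}
valid for $\rho_j \in S_j$, $x_j \in [0, T_j]$, $|\rho_j| \to \iy$, with the roots ordered as in \eqref{sector}. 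Putting $x_j = 0$ gives $e_{\mu j}^{(\nu)}(0, \la) = (\rho_j R_{\mu j})^{\nu}[1]_j$.

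Next I would expand the given solutions, $y_j = \sum_{\mu = 1}^{n_j} c_{\mu} e_{\mu j}$ and $z_j = \sum_{\mu = 1}^{n_j} d_{\mu} e_{\mu j}$, with coefficients depending only on $\la$. Substituting the values at $x_j = 0$ into the initial conditions turns the requirements on $y_j$ into the homogeneous relations $\sum_{\mu} c_{\mu} (\rho_j R_{\mu j})^{\nu}[1]_j = 0$, $\nu = \overline{0, k - 1}$, and those on $z_j$ into $\sum_{\mu} d_{\mu} (\rho_j R_{\mu j})^{\nu}[1]_j = \de_{\nu + 1, k}$, $\nu = \overline{0, k - 1}$. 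Regarding $c_{k + 1}, \dots, c_{n_j}$ (resp. $d_{k + 1}, \dots, d_{n_j}$) as parameters, the remaining coefficients solve a $k \times k$ system whose matrix is, to leading order, the Vandermonde matrix $\bigl[ (\rho_j R_{\mu j})^{\nu} \bigr]_{\nu = \overline{0, k - 1}, \, \mu = \overline{1, k}}$, with nonzero determinant $\rho_j^{k(k - 1)/2} \prod_{1 \le i < l \le k} (R_{lj} - R_{ij})$. By Cramer's rule these coefficients are rational in $\rho_j$ and stay polynomially bounded as $|\rho_j| \to \iy$.

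The decisive step is then the exponential separation \eqref{sector}: for fixed $x_j \in (0, T_j]$ and $\mu < \mu'$ the factor $\exp(\rho_j R_{\mu j} x_j)$ is exponentially dominated by $\exp(\rho_j R_{\mu' j} x_j)$ as $|\rho_j| \to \iy$ in $S_j$. Since the coefficients grow at most polynomially, every mode below the relevant threshold is negligible: for $y_j$ the modes $\mu \le k$ drop out, leaving the stated sum over $\mu \ge k + 1$ with $A_{\mu j} = c_{\mu}[1]_j$; for $z_j$ the modes $\mu \le k - 1$ drop out, while the normalization $z_j^{(k - 1)}(0) = 1$ enters through the $k$-th Vandermonde block. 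A short Lagrange-interpolation computation then evaluates the coefficient of the surviving $R_{kj}$-mode as
\[
    \frac{1}{\prod_{i = 1}^{k - 1} (\rho_j R_{kj} - \rho_j R_{ij})}[1]_j = \frac{\om_{kj}}{\rho_j^{k - 1}}[1]_j,
\]
in agreement with $\om_{kj} = \Omega_{k - 1, j}/\Omega_{kj}$; together with $B_{\mu j} = d_{\mu}[1]_j$ for $\mu \ge k + 1$ this gives the asserted formula for $z_j^{(\nu)}$.

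I expect the main obstacle to be the first step, namely establishing \eqref{birkhoff} with the remainder $[1]_j$ uniform both in $x_j \in [0, T_j]$ and throughout the sector $S_j$; once this fundamental-system asymptotics is available, everything else is elementary linear algebra with Vandermonde determinants. Since precisely this asymptotics is proved in \cite{Yur13}, one may alternatively cite it directly, as the statement does.
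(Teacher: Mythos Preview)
The paper does not give its own proof of this lemma; it simply records that ``The following Lemma has been proved in \cite{Yur13}'' and states the result. Your proposal supplies an actual argument, and it is the standard one (and almost certainly the one carried out in \cite{Yur13}): build a Birkhoff--Tamarkin fundamental system with the asymptotics \eqref{birkhoff}, expand $y_j$ and $z_j$ in it, read off the first $k$ coefficients from the initial data via the $k\times k$ Vandermonde block at $x_j=0$, and then use the strict ordering \eqref{sector} to absorb the modes $\mu\le k$ (resp.\ $\mu\le k-1$) into the $[1]_j$ factors of the surviving terms for $x_j>0$. The Lagrange/Vandermonde identification of the leading $R_{kj}$-coefficient with $\om_{kj}/\rho_j^{k-1}$ is correct.

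Two small points worth tightening. First, after factoring out the powers of $\rho_j$ the Vandermonde block has an $O(1)$ inverse, so $c_1,\dots,c_k$ are linear in $c_{k+1},\dots,c_{n_j}$ with $O(1)$ (not merely polynomially bounded) coefficients; this is what lets you absorb the piece of the $\mu$-th mode proportional to $c_{\mu'}$ into the $[1]_j$ of the $\mu'$-th term for each $\mu'\ge k+1$, even when some $c_{\mu'}$ vanish. Second, the absorption is pointwise in $x_j\in(0,T_j]$ but not uniform as $x_j\to 0$, since the exponential gap $\exp\bigl((\mathrm{Re}(\rho_j R_{\mu j})-\mathrm{Re}(\rho_j R_{\mu'j}))x_j\bigr)$ degenerates there; this is exactly why the lemma excludes $x_j=0$, and it is all that is needed for the application in Lemma~\ref{lem:psi} and Theorem~\ref{thm:uniq}.
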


Now we are going to apply Lemma~\ref{lem:asympt} to the Weyl-type solution, in order to study its asymptotic behavior.
Fix an edge $s = \overline{1, m}$ and an order $k = \overline{1, n_s - 1}$. 
For brevity, further we omit the indices $s$, $k$ it they are fixed, $\psi_j(x_j,\lambda ) := \psi_{skj}(x_j,\lambda )$.
Fix a ray $\{\la \colon \arg \la = \theta \}$, $\theta \ne 0, \pi$, which belongs to some sectors $S_j$ with
property \eqref{sector} for each $j = \overline{0, m}$. It follows from \eqref{BC} and Lemma~\ref{lem:asympt} that
$$
    \psi_s^{(\nu)}(x_s, \la) = \frac{\om_{ks}}{\rho_s^{k -1}}(\rho_s R_{ks})^{\nu} \exp(\rho_s R_{k s} x_s)[1]_s +
    \sum_{\mu = k + 1}^{n_s} A_{\mu s}(\rho_s) (\rho_s R_{\mu s})^{\nu}
    \exp(\rho_s R_{\mu s} x_s)[1]_s, \quad \nu = \overline{0, n_s - 1},
$$
$$
    \psi_j^{(\nu)}(x_j, \la) = \sum_{\mu = \max(n_j - k, 1) + 1}^{n_j} A_{\mu j}(\rho_j) (\rho_j R_{\mu j})^{\nu}
    \exp(\rho_j R_{\mu j} x_j)[1]_j, \quad j = \overline{1, m} \backslash s, \quad \nu = \overline{0, n_s - 1},   
$$
$$
    \psi_0^{(\nu)}(x_0, \la) = \sum_{\mu = 1}^{2} A_{\mu 0}(\rho_0) (\rho_0 R_{\mu 0})^{\nu}
    \exp(\rho_0 R_{\mu 0} x_0)[1]_0, \quad \nu = 0, 1.
$$
Substitution of these representations into the matching conditions \eqref{MC} gives a linear system $D_{sk}$
with respect to the coefficients $A_{\mu j}(\rho_j)$. Since each $A_{\mu j}(\rho_j)$ in this system is
multiplied by the corresponding exponent $\exp(\rho_j R_{\mu j} T_j)$ and $[1]_j = 1 + o(\la)$, $|\la| \to \iy$,
we obtain the following
asymptotics for the determinant of $D_{sk}$:
\begin{equation} \label{dsk}
d_{sk}(\la) = d_{sk}^0 \lambda^{\nu_{sk}} \exp(P_{sk}(\la)) (1 + o(\la)), \quad |\la| \to \iy,
\end{equation} 
where
$$
    P_{sk}(\la) = \rho_s \left( \sum_{\mu = k + 1}^{n_s} R_{\mu s}\right)T_s + 
    \sum_{\substack{j = \overline{1, m} \backslash s \\ k < n_j}} \rho_j \left(
    \sum_{\mu = \max(n_j - k, 1) + 1}^{n_j} R_{\mu j}
    \right) T_j + \rho_0 R_{2 0} T_0,
$$
and $\nu_{sk}$ is a rational power of $\lambda$. 
In order to have the main term of the asymptotics \eqref{dsk} distinct
from zero, we impose the requirement
\begin{equation} \label{regular}
d_{sk}^0 \ne 0, \quad s = \overline{1, m}, \quad k = \overline{1, n_s - 1}.
\end{equation}
The matching conditions \eqref{MC}, satisfying \eqref{regular}, are called \textit{regular}.

One can easily show that the determinant $\Delta_{sk\mu}(\la)$ asymptotically equals
$$
   d_{sk}^0 (\rho_s R_{k s})^{\mu} \lambda^{\nu_{sk}} \exp(P_{sk}(\la)) 
$$
up to a nonzero constant (under the current assumptions on $\la$). 
Consequently, if the matching conditions are regular, then $\Delta_{sk\mu}(\la) \not\equiv 0$.
Hence the boundary value problems $L_{sk\mu}$ have only discrete spectra.

Solving the system $D_{sk}$ by the Cramer's rule, we obtain, in particular
$$
    A_{\mu s}(\rho_s) = O(\rho_s^{1 - k} \exp(\rho_s (R_{ks} - R_{\mu s}) T_s), \quad \mu = \overline{k + 1, n_s},
$$
and finally arrive at the following assertion.

\begin{lem} \label{lem:psi}
Fix $s = \overline{1, m}$ and a sector $S_s$ with property \eqref{sector}.
For $x_s \in (0, T_s)$, $\nu = \overline{0, n_s - 1}$, $k = \overline{1, n_s}$,
the following asymptotic formula holds
$$
 \psi_{sks}^{(\nu)}(x_s, \la) = \frac{\om_{ks}}{\rho_s^{k -1}}(\rho_s R_{ks})^{\nu} \exp(\rho_s R_{k s} x_s)[1]_s,
 \quad \rho_s \in S_s, \quad |\rho_s| \to \iy.
$$
\end{lem}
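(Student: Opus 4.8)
The plan is to start from the representation of $\psi_{sks}^{(\nu)}(x_s,\la)$ on the edge $e_s$ obtained just above the statement, namely the splitting into the distinguished term carrying $R_{ks}$ and the residual sum over $\mu = \overline{k+1, n_s}$, and to show that under \eqref{sector} every residual summand is exponentially negligible against the distinguished term on the open interval $(0, T_s)$. The only analytic input still needed is the Cramer's-rule bound
$$
A_{\mu s}(\rho_s) = O\!\left(\rho_s^{1-k}\exp(\rho_s(R_{ks}-R_{\mu s})T_s)\right), \quad \mu = \overline{k+1, n_s},
$$
which is already recorded before the statement; everything else is an elementary comparison of exponentials.

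Concretely, I would fix $\nu$ and divide the $\mu$-th residual term by the distinguished term. Writing $L_\nu := \frac{\om_{ks}}{\rho_s^{k-1}}(\rho_s R_{ks})^{\nu}\exp(\rho_s R_{ks} x_s)$ for the latter, the factor $\rho_s^{k-1}$ cancels the $\rho_s^{1-k}$ from $A_{\mu s}$, the polynomial prefactors reduce to the bounded constant $(R_{\mu s}/R_{ks})^{\nu}$, and the two exponentials combine to give
$$
\frac{A_{\mu s}(\rho_s)(\rho_s R_{\mu s})^{\nu}\exp(\rho_s R_{\mu s} x_s)}{L_\nu} = O\!\left(\exp\bigl(\rho_s(R_{ks}-R_{\mu s})(T_s - x_s)\bigr)\right).
$$
By \eqref{sector} we have $\mbox{Re}(\rho_s R_{ks}) < \mbox{Re}(\rho_s R_{\mu s})$ for $\mu > k$, while $T_s - x_s > 0$ for $x_s \in (0, T_s)$; hence the real part of the exponent is negative and tends to $-\iy$ (linearly in $|\rho_s|$, since $\arg\rho_s$ is kept constant). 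Thus each residual summand is $L_\nu \cdot O(\exp(-c|\rho_s|))$ for some $c > 0$, and summing over the finitely many $\mu$ and absorbing into the $[1]_s$ factor already present in $L_\nu$ yields exactly the asserted formula for $k = \overline{1, n_s - 1}$. The remaining case $k = n_s$ has an empty residual sum: there $\psi_{sn_s s} = C_{n_s s}$ by definition, and the claimed asymptotic reduces to the leading behavior of the top fundamental solution $C_{n_s s}^{(\nu)}$, i.e. the boundary case $k = n_s$ of the $z_j$-type asymptotics, so no comparison is needed.

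The main delicacy, and the reason the statement is confined to the open interval, lies entirely at the endpoint $x_s = T_s$: there $T_s - x_s = 0$, the exponential separation above becomes $O(1)$, and the subdominant terms are no longer negligible relative to $L_\nu$, so the one-term asymptotic genuinely breaks down. Consequently the estimate is uniform only on compact subintervals $[\eps, T_s - \eps]$, and care must be taken to phrase the $O$- and $[1]_s$-bookkeeping so that the decay constant $c = c(\eps)$ stays positive; near $x_s = 0$ the representation inherited from Lemma~\ref{lem:asympt} is itself unavailable, which accounts for excluding the left endpoint. Apart from this uniformity issue the argument is routine, since the heavy estimate for $A_{\mu s}$ is already in hand.
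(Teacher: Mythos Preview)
Your proposal is correct and follows exactly the route the paper itself sketches: the paper records the Cramer's-rule bound on $A_{\mu s}(\rho_s)$ and then simply states that one ``arrive[s] at the following assertion,'' leaving implicit precisely the exponential comparison you have written out. Your handling of the endpoint restrictions and of the separate case $k = n_s$ (where $\psi_{sn_ss} = C_{n_ss}$ by definition) is a welcome elaboration of details the paper omits.
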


\bigskip

{\large \bf 4. Auxiliary inverse problems}

\bigskip

In this section we consider auxiliary inverse problems of recovering the differential operator
on each fixed edge. We start from the boundary edges. Fix $s = \overline{1,m}$ and consider the following
inverse problems of the edge $e_s$.

\medskip

{\bf IP(s).} {\it Given the Weyl-type matrix $M_s$, construct the potential $q_s := \{ q_{\mu s}\}_{\mu = 0}^{n_s - 2}$
on the edge~$e_s$.}

\medskip

In IP(s) we construct the potential on the single edge $e_s$, but the Weyl-type matrix $M_s$
brings global information from the whole graph. In other words, IP(s) is not a local inverse
problem related only to the edge $e_s$.

Let us prove the uniqueness theorem for the solution of IP(s). For this purpose together
with $q$ we consider a potential $\tilde q$. Everywhere below if a symbol $\al$ denotes an object related to
$q$ then $\tilde \al$ will denote the analogous object related to $\tilde q$.

\begin{thm} \label{thm:uniq}
Fix $s = \overline{1, m}$. If $M_s = \tilde M_s$, then $q_s = \tilde q_s$. Thus, the specification
of the Weyl-type matrix $M_s$ uniquely determines the potential $q_s$ on the edge $e_s$.
\end{thm}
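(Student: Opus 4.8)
The plan is to apply the method of spectral mappings, constructing a matrix of transition between the Weyl-type solutions for $q$ and those for $\tilde q$, and showing that this matrix is the identity. First I would collect the Weyl-type solutions and their derivatives into the $n_s\times n_s$ matrix $\Phi_s(x_s,\la)=[\psi_{sks}^{(\nu-1)}(x_s,\la)]_{k,\nu=1}^{n_s}$ and, likewise, the fundamental system into $\mathcal C_s(x_s,\la)=[C_{\mu s}^{(\nu-1)}(x_s,\la)]_{\mu,\nu=1}^{n_s}$. By \eqref{relpsi} one has $\Phi_s=\mathcal M_s\mathcal C_s$, where $\mathcal M_s=[M_{sk\mu}]$ is unipotent upper-triangular (since $M_{sk\mu}=\de_{k\mu}$ for $k\ge\mu$); together with \eqref{detC} this gives $\det\Phi_s\equiv 1$, so $\Phi_s$ is invertible and $\tilde\Phi_s^{-1}$ is meromorphic. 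I then set $P(x_s,\la):=\Phi_s(x_s,\la)\,\tilde\Phi_s(x_s,\la)^{-1}$.

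The first substantive step is to prove that $P$ is entire in $\la$. Its entries are meromorphic, with possible poles only at the eigenvalues $\Lambda_{skk}$, i.e. at the zeros of the functions $\Delta_{skk}$; by \eqref{MDelta} these are exactly the poles of the entries of $\mathcal M_s$, and since $M_s=\tilde M_s$ we have $\mathcal M_s=\tilde{\mathcal M}_s$, so $\Phi_s$ and $\tilde\Phi_s$ share the same poles. Writing $P=\mathcal M_s\,(\mathcal C_s\tilde{\mathcal C}_s^{-1})\,\mathcal M_s^{-1}$ with $\mathcal C_s\tilde{\mathcal C}_s^{-1}$ entire, I would show that at a zero $\la_0$ of some $\Delta_{sk_0k_0}$ only the $k_0$-th row of $\mathcal M_s$ and of $\mathcal M_s^{-1}$ is singular, and that the principal part of $P$ at $\la_0$ vanishes because the residues of $\Phi_s$ and $\tilde\Phi_s$ are proportional through the common matrix $\mathcal M_s$; equivalently, $\mathcal C_s=\mathcal M_s^{-1}\Phi_s$ and $\tilde{\mathcal C}_s=\mathcal M_s^{-1}\tilde\Phi_s$ are entire, which is the mechanism behind the cancellation. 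I expect this residue bookkeeping — carried out as in \cite{Yur13}, with simple and multiple eigenvalues treated separately — to be the main obstacle of the argument.

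The second step is the asymptotic control. Fixing a sector $S_s$ with property \eqref{sector}, Lemma~\ref{lem:psi} gives $\psi_{sks}^{(\nu)}(x_s,\la)=\dfrac{\om_{ks}}{\rho_s^{k-1}}(\rho_s R_{ks})^{\nu}\exp(\rho_s R_{ks}x_s)[1]_s$, and the identical leading factors occur for $\tilde\psi_{sks}^{(\nu)}$, since they depend only on the orders and the roots $R_{ks}$ and not on the potential. Substituting these into $P=\Phi_s\tilde\Phi_s^{-1}$, where $\tilde\Phi_s^{-1}=\mathrm{adj}\,\tilde\Phi_s$ because $\det\tilde\Phi_s\equiv 1$, the matching exponentials $\exp(\pm\rho_s R_{ks}x_s)$ cancel and one obtains $P(x_s,\la)=I+O(\rho_s^{-1})$ as $|\rho_s|\to\iy$ in $S_s$. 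The same conclusion holds in every sector after the corresponding renumbering of the roots, so $P(x_s,\la)\to I$ as $|\la|\to\iy$ in all directions.

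Combining the two steps finishes the proof: $P(x_s,\cdot)$ is entire and, by the asymptotics, bounded outside a large disk, while continuity bounds it on the disk, so it is bounded on all of $\mathbb C$; by Liouville's theorem it is constant in $\la$, and the limit value forces $P(x_s,\la)\equiv I$. Hence $\Phi_s\equiv\tilde\Phi_s$, and, using $\mathcal M_s=\tilde{\mathcal M}_s$ and $\Phi_s=\mathcal M_s\mathcal C_s$, also $\mathcal C_s\equiv\tilde{\mathcal C}_s$, i.e. the fundamental systems on $e_s$ coincide for every $\la$. Reading off the coefficients of \eqref{eqv} from the first-order companion system satisfied by $\mathcal C_s$ then yields $q_{\mu s}=\tilde q_{\mu s}$ for $\mu=\overline{0,n_s-2}$, that is $q_s=\tilde q_s$.
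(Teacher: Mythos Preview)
Your overall plan --- form the transition matrix between the two families of Weyl solutions, show it is entire, control its growth via Lemma~\ref{lem:psi}, and conclude by Liouville --- is exactly the paper's method. The difficulty is in your ``first substantive step.'' With your convention $\Phi_s=\mathcal M_s\mathcal C_s$ (so $k$ is the row index and $\nu$ the column index), the hypothesis $\mathcal M_s=\tilde{\mathcal M}_s$ gives
\[
P=\Phi_s\tilde\Phi_s^{-1}=\mathcal M_s\bigl(\mathcal C_s\tilde{\mathcal C}_s^{-1}\bigr)\mathcal M_s^{-1},
\]
and here the two copies of $\mathcal M_s$ conjugate rather than cancel, so $P$ is in general \emph{not} entire. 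Already for $n_s=2$ one computes $P_{22}=Q_{22}-M_{s12}\,Q_{21}$ with $Q=\mathcal C_s\tilde{\mathcal C}_s^{-1}$, and since $Q_{21}(x_s,\la_0)=C_{2s}\tilde C_{2s}'-C_{2s}'\tilde C_{2s}$ has no reason to vanish at a pole $\la_0$ of $M_{s12}$, the entry $P_{22}$ inherits that pole. Your auxiliary claim that ``only the $k_0$-th row of $\mathcal M_s^{-1}$ is singular'' also fails for $n_s\ge 3$: for instance $(\mathcal M_s^{-1})_{13}=M_{s12}M_{s23}-M_{s13}$ picks up a pole of $M_{s23}$ although $k_0=2$. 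Hence the residue bookkeeping you sketch cannot establish entireness of this particular $P$.

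The paper sidesteps all of this by placing the Weyl matrix on the \emph{right}: with its indexing one has $\psi_s=C_s M_s^T$, and then
\[
\mathcal P_s:=\psi_s\tilde\psi_s^{-1}=C_s M_s^T(\tilde M_s^T)^{-1}\tilde C_s^{-1}=C_s\tilde C_s^{-1},
\]
which is manifestly entire of order $1/n_s$ because $\det\tilde C_s\equiv 1$. In your notation the equivalent object is $\tilde\Phi_s^{-1}\Phi_s=\tilde{\mathcal C}_s^{-1}\mathcal C_s$; once you use this matrix instead, entireness is immediate and no residue analysis is needed. After that, your asymptotic step via Lemma~\ref{lem:psi} and the Liouville conclusion go through exactly as you describe, yielding $\psi_{sks}\equiv\tilde\psi_{sks}$ for all $k$ and hence $q_s=\tilde q_s$.
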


\begin{proof}

Denote $\psi_s(x_s, \la) := [\psi_{sks}^{(\nu - 1)}(x_s, \la)]_{k, \nu = 1}^{n_s}$, 
$C_s(x_s, \la) := [C_{ks}^{(\nu - 1)}(x_s, \la)]_{k, \nu = 1}^{n_s}$.
Then by \eqref{relpsi} we get
\begin{equation} \label{matrpsi}
\psi_s(x_s, \la) = C_s(x_s, \la) M_s^{T}(\la),
\end{equation}
where $T$ is the sign for the trasposition. Define the matrix
$\mathcal{P}_s(x_s, \la) = [\mathcal{P}_{sjk}(x_s, \la)]_{j,k = 1}^{n_s}$ by the formula
$$
    \mathcal{P}_s(x_s, \la) = \psi_s(x_s, \la) (\tilde \psi_s(x_s, \la))^{-1}.
$$
Applying Lemma \ref{lem:psi}, we get
\begin{equation} \label{asymptP}
\mathcal{P}_{s1k}(x_s, \la) - \de_{1k} = O(\rho_s^{-1}), \quad k = \overline{1, n_s}, \quad x_s \in (0, T_s),
\quad \arg \la \ne 0, \pi, \quad |\la| \to \iy.
\end{equation}
Transform the matrix $\mathcal{P}_s(x_s, \la)$, using \eqref{matrpsi} and $M_s = \tilde M_s$:
$$
    \mathcal{P}_s(x_s, \la) = C_s(x_s, \la) (\tilde C_s(x_s, \la))^{-1}.
$$
Taking \eqref{detC} into account, we conclude that for each fixed $x_s$, the matrix-valued function
$\mathcal{P}_s(x_s, \la)$ is an entire analytic function in $\la$ of order $1/n_s$. Together with \eqref{asymptP},
this yields $\mathcal{P}_{s11}(x_s, \la) \equiv 1$, $\mathcal{P}_{s1k}(x_s, \la) \equiv 0$,
$k = \overline{2, n_s}$. Consequently, $\psi_{sks}(x_s, \la) \equiv \tilde \psi_{sks}(x_s, \la)$
and $q_s = \tilde q_s$.

\end{proof}

Using the method of spectral mappings, one can get a constructive procedure for the solution
of IP(s). It can be obtained by the same arguments as for $n$-th order differential operators on
a finite interval (see \cite[Ch. 2]{Yur02} for detais).

For the Sturm-Liouville operator on the cycle $e_0$, we consider the following auxiliary inverse
problem.

\medskip

{\bf IP(0).} {\it Given $d(\la)$, $h(\la)$ and $\Omega$, construct $q_{00}(x_0)$.}

\medskip

This inverse problem was studied in \cite{Mar75, Stan70} and other papers. In fact, one can easily
construct Dirichlet spectral data $\{ \nu_n, \al_n \}_{n \ge 1}$ by the data 
$\{ d(\la), h(\la), \Omega \}$, and reduce IP(0) to
the classical Sturm-Liouville problem \cite{Lev84, Mar77, PT87, FY01}. 
Thus, IP(0) has a unique solution which can be
found by the following algorithm.

\medskip

{\bf Algorithm 1.} (\cite{Yur08}) Given $d(\la)$, $h(\la)$ and $\Omega$.

\begin{enumerate} 
\item Find the zeros of $h(\la)$, $\{ \nu_n\}_{n \ge 1}$.
\item Calculate $H(\nu_n) := \om_n \sqrt{d^2(\nu_n) - 4}$.
\item Find $S_0'(T_0, \nu_n) := (d(\nu_n) - H(\nu_n)) / 2$.
\item Calculate $\al_n := \dot{h}(\nu_n) S_0'(T_0, \nu_n)$, $\dot{h}(\la) := \dfrac{d h(\la)}{d \la}$.
\item Construct $q_{00}$ from the given spectral data $\{ \nu_n, \al_n\}_{n \ge 1}$
by solving the classical Sturm-Liouville problem.
\end{enumerate}

\bigskip

{\large \bf 5. Solution of Inverse Problem 1}

\bigskip

Now we are ready to present a constructive procedure for the solution of Inverse Problem~1.

\medskip

{\bf Algorithm 2.} Given the spectra $\Lambda_{s k \mu}$, $s = \overline{1, m}$, $k = \overline{1, n_s - 1}$,
$\mu = \overline{k, n_s}$,
and the signs $\Omega$.

\begin{enumerate}
\item Construct the characteristic functions $\Delta_{sk\mu}(\la)$ by their zeros $\Lambda_{sk\mu}$.
\item Find the Weyl-type matrices $M_s(\la)$, $s = \overline{1, m}$, via \eqref{MDelta}.
\item For each $s = \overline{1, m}$, solve the inverse problem IP(s) and find the potential $q_s$ on the edge~$e_s$.    
\item Construct the solutions $C_{ks}(x_s, \la)$, $s = \overline{1, m}$, $k = \overline{1, n_s}$.
\item Find $h(\la)$, $d(\la)$ from \eqref{Delta1}, \eqref{Deltak}.
\item Solve IP(0) by $d(\la)$, $h(\la)$ and $\Omega$, using Algorithm 1, and 
and construct the potential on the cycle $e_0$.
\end{enumerate}

On step 5, we assume that there exist at least one edge with the order $n_s > 2$. The
case of all $n_s = 2$ was considered in \cite{Yur08}. Then $h(\la)$ can be easily determined from \eqref{Deltak}, and
then $d(\la)$ from \eqref{Delta1}.

\medskip

{\it Remark.} Note that there are often considered inverse problems by the Weyl functions and
their generalizations. But in the present case, the functions $d(\la) - 2$ and $h(\la)$ can not be
uniquely recovered from the Weyl matrices $M_s$, if these functions have common zeros.

\medskip

Theorem~\ref{thm:uniq} together with the uniqueness of the solution for IP(0) yields the following result.

\begin{thm}
The spectra $\Lambda_{sk\mu}$, $k = \overline{1, n_s - 1}$,
$\mu = \overline{k, n_s}$, and the signs $\Omega$
determine the potential $q$ on the graph $G$ uniquely.
\end{thm}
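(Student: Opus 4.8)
The plan is to assemble the global uniqueness statement from the two uniqueness results already established for the auxiliary problems, following the logic of Algorithm~2. The overall strategy is a reduction: I would show that the given spectral data $\{\Lambda_{sk\mu}\}$ and signs $\Omega$ determine, step by step, all the ingredients needed to invoke Theorem~\ref{thm:uniq} on each boundary edge and the uniqueness of IP(0) on the cycle. Concretely, suppose $q$ and $\tilde q$ are two potentials on $G$ generating the same data: $\Lambda_{sk\mu} = \tilde\Lambda_{sk\mu}$ for all admissible $s,k,\mu$, and $\Omega = \tilde\Omega$. I must deduce $q = \tilde q$.

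First I would use Lemma~1 to pass from the spectra to the characteristic functions: since $\Lambda_{sk\mu} = \tilde\Lambda_{sk\mu}$ and each $\Delta_{sk\mu}(\la)$ is determined uniquely by its zeros, we get $\Delta_{sk\mu}(\la) \equiv \tilde\Delta_{sk\mu}(\la)$ for all $s,k,\mu$. Then formula \eqref{MDelta}, $M_{sk\mu} = -\Delta_{sk\mu}/\Delta_{skk}$, yields $M_s(\la) \equiv \tilde M_s(\la)$ for every $s = \overline{1,m}$, because the entries with $k \ge \mu$ are the fixed values $\de_{k\mu}$ and the entries with $k < \mu$ are ratios of the coinciding characteristic functions. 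At this point Theorem~\ref{thm:uniq} applies on each boundary edge: $M_s = \tilde M_s$ forces $q_s = \tilde q_s$, i.e. the potentials agree on every $e_s$, $s = \overline{1,m}$. Consequently the fundamental solutions coincide, $C_{ks}(x_s,\la) \equiv \tilde C_{ks}(x_s,\la)$, and hence so do all the combinations $F_{s\mu}$, $G_{s\mu}$, $G_{sk\mu}$ appearing in \eqref{Delta1}, \eqref{Deltak}.

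Next I would recover the cycle data. Having $q_s = \tilde q_s$, the coefficients $F_{s\mu}, G_{s\mu}, G_{sk\mu}$ are the same for both potentials, so relations \eqref{Delta1} and \eqref{Deltak}, combined with $\Delta_{sk\mu} \equiv \tilde\Delta_{sk\mu}$, become a system of identities in the two unknown pairs $(h,d)$ and $(\tilde h, \tilde d)$ with identical known coefficients. Assuming (as in step~5 of Algorithm~2) that some edge has order $n_s > 2$, the higher-order relations \eqref{Deltak} isolate $h(\la) \equiv \tilde h(\la)$, and then \eqref{Delta1} gives $d(\la) \equiv \tilde d(\la)$. Together with $\Omega = \tilde\Omega$, this provides identical input $\{d,h,\Omega\}$ to IP(0); by the uniqueness of its solution (Algorithm~1 reduces it to the classical Sturm--Liouville problem via the Dirichlet spectral data $\{\nu_n,\al_n\}$), we conclude $q_{00} = \tilde q_{00}$ on the cycle $e_0$. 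Combining with $q_s = \tilde q_s$ on all boundary edges gives $q = \tilde q$ on $G$.

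The main obstacle, and the step deserving the most care, is the separation of $h$ and $d$ from the relations \eqref{Delta1}, \eqref{Deltak}. The Remark in Section~5 flags precisely this delicacy: $d-2$ and $h$ cannot in general be read off the Weyl matrices $M_s$ alone when they share common zeros, which is why the full characteristic functions $\Delta_{sk\mu}$ (not merely their ratios) and the presence of an edge of order $n_s>2$ are needed. I would therefore argue carefully that the structural form of \eqref{Deltak} for $k>1$ genuinely determines $h$, and that the degenerate all-$n_s=2$ case is handled separately as indicated by reference to \cite{Yur08}. The remaining arguments are formal substitutions once the uniqueness of the pair $(h,d)$ is secured.
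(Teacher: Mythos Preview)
Your proposal is correct and follows essentially the same approach as the paper: the paper's proof of this theorem is the single sentence ``Theorem~\ref{thm:uniq} together with the uniqueness of the solution for IP(0) yields the following result,'' with the implicit chain of reductions being precisely Algorithm~2, which you have carefully unpacked. Your treatment is in fact more thorough than the paper's, particularly in flagging the step where $h$ and $d$ must be separated via \eqref{Delta1}--\eqref{Deltak} and the role of the assumption $n_s > 2$ for some $s$.
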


\bigskip

{\large \bf Appendix. Example}

\bigskip

In this section, we consider an example, that illustrates how the entries of the Weyl-type matrices
can be found from the linear system $E_{sk}$ (see Section 2) and shows the structure of determinants.

Let $s = \overline{1, m}$ and $k = \overline{1, n_s - 1}$ be fixed.
For brevity, in this section we omit the indices $s$ and $k$, when they are fixed.
So we write $\psi_j(x_j, \la)$ instead of $\psi_{skj}(x_j, \la)$.
We substitute the following expansions
\[
\psi_j(x_j,\lambda )=\sum_{\mu=1}^{n_j}M_j^{\mu}(\lambda )C_{\mu j}(x_j,\lambda ), \quad j=\overline{1,m}.
\]
into the mathcing conditions \eqref{MC} and obtain the 
coefficients $M_k^{\mu}(\lambda )$ from a linear system by the Cramer's rule: 
$M_k^{\mu } (\lambda )=-\dfrac{\Delta_{\mu}(\lambda )}{\Delta_0(\lambda )}$.

Let $m=2$, $n_1=3$, $n_2=4$.

Fix $s=1$, $k=1$. Then the boundary conditions \eqref{BC} take the form:
$$
\psi_1(0,\lambda)=1, \quad \psi_2(0,\lambda)= \psi'_2(0,\lambda) = \psi''_2(0,\lambda)=0.
$$
Consequently,
\begin{align*}
\psi_0(x_0,\lambda) &=M_0^1(\lambda)C_0(x_0,\lambda)+M_0^2(\lambda)S_0(x_0,\lambda), \\
\psi_1(x_1,\lambda) &=C_{11}(x_1,\lambda)+M_1^2(\lambda)C_{21}(x_1,\lambda)+M_1^3(\lambda)C_{31}(x_1,\lambda), \\
\psi_2(x,\lambda) &=M_2^4(\lambda)C_{42}(x_2,\lambda).
\end{align*}
Let  $U_{j\nu}(\psi_j)=\psi_j^{(\nu)}(T_j,\lambda)$.
Then the matching conditions \eqref{MC} give the following system (we omit the arguments $(T_j,\la)$ of $C_{\mu j})$:
\[
\begin{bmatrix}
 -1+C_0 & S_0 & 0 & 0 & 0 \\ 
-1 & 0 & C_{21} & C_{31} & 0  \\ 
C'_0 & -1+S'_0 & C'_{21} & C'_{31} & C'_{42} \\ 
0 & 0 & C''_{21} & C''_{31} & C''_{42} \\ 
-1 & 0 & 0 & 0 & C_{24} 
 \end{bmatrix}
\begin{bmatrix}
M_0^1 \\ M_0^2 \\ M_1^2 \\ M_1^3 \\ M_2^4
 \end{bmatrix} + 
\begin{bmatrix}
0 \\ C_{11} \\ C'_{11} \\ C''_{11} \\ 0
 \end{bmatrix} =0.
\]
The determinant of this system equals
\[
\Delta_0 (\lambda)=-\bigl(d(\lambda)-2\bigr)\cdot
\begin{vmatrix}
C_{21} & C_{31} \\
C''_{21} & C''_{31}
 \end{vmatrix}
\cdot C_{42}+h(\lambda) \cdot
\left(\begin{vmatrix} C'_{21} & C'_{31} \\ C''_{21} & C''_{31}\end{vmatrix}  C_{24}-\begin{vmatrix}C_{21} & C_{31} & 0  \\ C'_{21} & C'_{31} & C'_{42} \\C''_{21} & C''_{31} & C''_{42}\end{vmatrix}\right)
\]
and $M_1^k(\lambda)=-\dfrac{\Delta_k (\lambda)}{\Delta_0 (\lambda)}$, $k=2,3$, where $\Delta_k (\lambda)$ 
can be obtained from $\Delta_0 (\lambda)$ by change of $C_{k1}$ to~$C_{11}$.

For $s=1$, $k=2$ we have
\begin{align*}
\psi_1(0,\lambda)&=0, \quad \psi'_1(0,\lambda)=1, \quad \psi_2(0,\lambda)=\psi'_2(0,\lambda)=0,\\
\psi_1(x_1,\lambda)&=C_{21}(x_1,\lambda)+M_1^3(\lambda)C_{31}(x_1,\lambda),\\
\psi_2(x_2,\lambda)&=M_2^3(\lambda)C_{32}(x_2,\lambda)+M_2^4(\lambda)C_{42}(x_2,\lambda).
\end{align*}
The determinant of the system is
\[
\begin{vmatrix}
-1+C_0 & S_0 & 0 & 0 & 0 \\
-1 & 0 & C_{31} & 0 & 0  \\ 
0 & 0 &-C'_{31} & C'_{32} &C'_{42}  \\ 
0 & 0 & C''_{31} & C''_{32} &C''_{42} \\
-1 & 0 & 0 & C_{32} &C_{42}  
\end{vmatrix} 
 = h(\lambda)\left(\begin{vmatrix} -C'_{31} & C'_{32} &C'_{42}\\ C''_{31} & C''_{32} &C''_{42}\\0 & C_{32} &C_{42}  \end{vmatrix}-C_{31}\begin{vmatrix}C'_{32} &C'_{42}\\ C''_{32} &C''_{42}\end{vmatrix}\right).
\]

Thus, formulas \eqref{Delta1}, \eqref{Deltak} are valid in this case.

\medskip

{\bf Acknowledgment.} This research was supported by Grant 13-01-00134
of Russian Foundation for Basic Research.

\medskip

\vspace{1cm}

Natalia Bondarenko

Department of Mathematics

Saratov State University

Astrakhanskaya 83, Saratov 410026, Russia

{\it bondarenkonp@info.sgu.ru}

\end{document}